\theoremstyle{plain}
    \newtheorem{thm}{Theorem}[section]
    \newtheorem{lemma}{Lemma}
    \newtheorem {seq}{Consequence}
    \newtheorem{subsec}[thm]{}
\theoremstyle{definition}
\newcommand{\mysdiag}[2][]
{\stepcounter{thm}\begin{equation}
     \tag{\thethm}{#1}\vcenter{\xymatrix@R=25pt@C=-25pt{#2}}\end{equation}}
\newcommand{\mytdiag}[2][]
{\stepcounter{thm}\begin{equation}
     \tag{\thethm}{#1}\vcenter{\xymatrix@R=25pt@C=15pt{#2}}\end{equation}}
\newcommand{\myudiag}[2][]
{\stepcounter{thm}\begin{equation}
     \tag{\thethm}{#1}\vcenter{\xymatrix@R=27pt@C=12pt{#2}}\end{equation}}
\newcommand{\myvdiag}[2][]
{\stepcounter{thm}\begin{equation}
     \tag{\thethm}{#1}\vcenter{\xymatrix@R=20pt@C=16pt{#2}}\end{equation}}
\newcommand{\mywdiag}[2][]
{\stepcounter{thm}\begin{equation}
     \tag{\thethm}{#1}\vcenter{\xymatrix@R=22pt@C=32pt{#2}}\end{equation}}
\newcommand{\myxdiag}[2][]
{\stepcounter{thm}\begin{equation}
    \tag{\thethm}{#1}\vcenter{\xymatrix@R=30pt@C=25
      pt{#2}}\end{equation}}
\newcommand{\myzdiag}[2][]
{\stepcounter{thm}\begin{equation}
     \tag{\thethm}{#1}\vcenter{\xymatrix@R=10pt@C=25pt{#2}}\end{equation}}
\newcommand{\hsm}{\hspace*{2 mm}}
\newcommand{\vsn}{\vspace{2 mm}}
\begin{document}
%
%
\title{An infinite branch in a decidable tree}
%
\author[Soprunov S.F.]{Soprunov S.F.}
\email{soprunov@mail.ru}

\begin{abstract}
%
%
We consider a structure $\mathcal {M} = \langle \mathbb N, \{Tr,<\} \rangle$, where the relation $Tr(a,x,y)$ with a parameter $ a$ defines a family of trees on $\mathbb N$ and $<$ is the usual order on $\mathbb N$. We show that if the elementary theory of $\mathcal M$ is decidable then (1) the relation $Q( a) \rightleftharpoons$ "\emph{there is an infinite branch in the tree} $Tr( a,x,y)$" is definable in $\mathcal M$, and (2) if there is an infinite branch in the tree $Tr( a,x,y)$, then there is a definable in $\mathcal M$ infinite branch.

\end{abstract}

\maketitle
\section{Preliminaries }
Let $Tr(x,y)$ be a tree on the $\mathbb N$, we are interested in whether there is an infinite branch in this tree. If the tree is locally finite then, according K\"{o}nig's lemma~\cite{b}, an infinite branch exists iff the tree is infinite. It is easy to notice, that in this case an infinite branch can be defined in the structure $\langle \mathbb N, \{Tr,<\} \rangle$. The question is more complicated for an arbitrary tree.

We show that if a family $Tr( a,x,y)$ of trees with a parameter $ a$ such, that the elementary theory of $\mathcal {M} = \langle \mathbb N, \{Tr,<\} \rangle$ is decidable then (1) the relation $Q( a) \rightleftharpoons$ "\emph{there is an infinite branch in the tree} $Tr( a,x,y)$" is definable in $\mathcal M$, and (2) if there is an infinite branch in the tree $Tr( a,x,y)$, then there is a definable in $\mathcal M$ infinite branch in the tree $Tr( a,x,y)$. 

For simplicity hereinafter we write $a$ instead of  $\bar a$ in parameters though all parameters could be vectors as well as numbers.

The proof consists of two steps. First we show, that if a tree is in some sense complicated, then the theory of the corresponding structure is undecidable. Second we show, that if a tree is not complicated, then (1) and (2) holds. To demonstrate undecidability we use an interpretation of fragments of the arithmetic in the structure~\cite{t}.

\section{Interpretation }

In this section we consider a structure $\mathcal {M}=\langle \mathbb N, \Sigma \rangle$, the usual order $<$ belongs to $\Sigma$. Suppose that subset $S \subset \mathbb N$ is finite and a relation $B(x, y)$ is definable in $\mathcal {M}$. By $s^B_i$ we denote $S \cap \{x|B(x,i)\}$ and say, that $B$ \emph{realises the number k on S} ($k \leqslant |S|$) if $\{s^B_i | i \in \mathbb N\}=\{s \subset S | |s|=k\}$. The property to realise a number can be expressed by the statement:
\begin{equation*}\label{f1}
(\forall i,j)(s^B_i \subset s^B_j \to s^B_i=s^B_j) \land (\forall i,a,b)(a \in s^B_i \land b \in S \setminus s^B_i \to (\exists j)(s^B_j = s^B_i \cup \{b\} \setminus \{a\}))
\end{equation*}

We say that a relation $C(x,y,z)$ \emph{realises the arithmetic on S} if for any $k \leqslant |S|$ there is such $a$, that the relation $B_a(x,y) \rightleftharpoons C(x,y,a)$ realises the number $k$ on $S$. The property to realise the arithmetic can be expressed by the statement:
\begin{multline*}
(\exists z)(\forall i)(s^{B_z}_i = \varnothing) \land\\
\land (\forall z)((B_z \mbox{ \textit{realises a number on S} }) \land (\exists i)(s^{B_z}_i \ne S)\to\\ 
\to (\exists u)(B_u \mbox{ \textit{realises a number on S} }\\
 \land (\exists i,j,a)(a \in S \setminus s^{B_z}_i \land s^{B_{u}}_j=s^{B_z}_i \cup \{a\})))
\end{multline*}

Note that if a relation $C$ realises the arithmetic on $S$, then we can define addition and multiplication on the segment $[0, |S|]$. Addition $S(n,m,l)$ may be defined as
\begin{multline*}
B_n \mbox{ \textit{realises a number on S} } \land B_m \mbox{ \textit{realises a number on S} } \land B_l \mbox{ \textit{realises a number on S} } \land\\
(\exists i,j,k)(s^{B_l}_k=s^{B_n}_i \cup s^{B_m}_j \land s^{B_n}_i \cap s^{B_m}_j = \varnothing)
\end{multline*} 
Multiplication $P(n,m,l)$ may be defined as
\begin{multline*}
B_n \mbox{ \textit{realises a number on S} } \land B_m \mbox{ \textit{realises a number on S} } \land B_l \mbox{ \textit{realises a number on S} } \land\\
(\exists i,j)(s^{B_n}_i \subset s^{B_l}_j \land max(s^{B_l}_j)=max(s^{B_n}_i) \land min(s^{B_l}_j)=min(s^{B_n}_i) \land\\
(\forall a,b \in s^{B_n}_i)(a<b \land (\forall c \in s^{B_n}_i)(a < c \to b \leqslant c) \to (\exists k)(s^{B_m}_k=\{x \in s^{B_l}_j | a \leqslant x <b\})))
\end{multline*} 
(It is not exactly $l=n \cdot m$ but rather $l=n \cdot m+1$ which is not important)

\begin{lemma}\label{model}
If there are definable in $\mathcal {M}$ relations $S( b,x), D( b,x,y,z)$ such that for any natural $n$ for some $ b_n$ the relation $D( b_n,x,y,z)$ realises the arithmetic on $\{x | S( b_n,x)\}$ and $n=|\{x | S( b_n,x)\}|$, then the elementary theory of $\mathcal {M}$ is undecidable.
\end{lemma}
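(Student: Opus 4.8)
The plan is to reduce an undecidable arithmetical problem---the halting problem, or equivalently the set of true $\Sigma_1$-sentences of $\langle \mathbb{N}; +, \times, <\rangle$---to the elementary theory of $\mathcal{M}$, using the hypotheses to build a single uniform interpretation of all finite initial segments of arithmetic. By the remarks preceding the lemma, whenever $D(b,x,y,z)$ realises the arithmetic on the finite set $S_b := \{x \mid S(b,x)\}$, the graphs of addition and multiplication, together with $<$ and the constant $0$, are definable in $\mathcal{M}$ with the single parameter $b$ on the domain $[0,|S_b|]$ (the displayed formulas for addition and multiplication supply exactly these graphs). Here an element $k \le |S_b|$ is represented by any parameter $a$ for which $B_a(x,y):=D(b,x,y,a)$ realises the number $k$ on $S_b$, the existence of such $a$ for every $k$ being precisely the clause ``$D$ realises the arithmetic on $S_b$''; two parameters represent the same element exactly when the families $\{s^{B_a}_i\}$ and $\{s^{B_{a'}}_i\}$ coincide, which is plainly definable. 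Thus, uniformly in $b$, we obtain a first-order interpretation in $\mathcal{M}$ of the finite relational arithmetic $\mathcal{A}_m = \langle [0,m]; +, \times, <, 0 \rangle$ with $m=|S_b|$, and by hypothesis every $m\in\mathbb{N}$ arises from some $b_m$.

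First I would make the interpretation explicit as a syntactic translation $\varphi \mapsto \varphi^{\mathrm{int}}_b$ carrying each sentence $\varphi$ of the relational language $\{+,\times,<,0\}$ to an $\mathcal{M}$-formula with the one free parameter $b$: relativise all quantifiers to the interpreted domain, replace equality by the definable equivalence above, and replace the atomic relations $x+y=z$, $x\cdot y=z$, $x<y$ by their definable graphs. Using a relational rather than a functional signature is essential here, since in a finite segment a sum or product need not exist; the relational form simply renders the corresponding atom false when a value overflows $[0,m]$. One then has $\mathcal{A}_m \models \varphi$ if and only if $\mathcal{M}\models \varphi^{\mathrm{int}}_{b_m}$, for every sentence $\varphi$ and every such $b_m$.

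Next I would establish the $\Sigma_1$-absoluteness step: for a $\Sigma_1$-sentence $\varphi=\exists\bar{y}\,\theta(\bar{y})$ with $\theta$ bounded, $\mathbb{N}\models\varphi$ if and only if $\mathcal{A}_m\models\varphi$ for some $m$ (equivalently, for all sufficiently large $m$). The forward direction fixes witnesses $\bar{y}$ in $\mathbb{N}$ and takes $m$ so large that $\bar{y}$ and every sum and product asserted to exist during the verification of $\theta(\bar{y})$ lie in $[0,m]$; then $\theta$ holds in $\mathcal{A}_m$. Conversely, since each instance of the addition or multiplication relation holding in $\mathcal{A}_m$ records a genuine identity in $\mathbb{N}$, any witness found in $\mathcal{A}_m$ is a genuine witness in $\mathbb{N}$. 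Combined with the previous step, the $\mathcal{M}$-sentence
\[
\varphi^{\ast}\ :=\ \exists b\,\bigl[\,\bigl(D(b,\cdot,\cdot,\cdot)\ \text{realises the arithmetic on}\ S_b\bigr)\ \wedge\ \varphi^{\mathrm{int}}_{b}\,\bigr]
\]
satisfies $\mathcal{M}\models\varphi^{\ast}$ iff $\mathcal{A}_m\models\varphi$ for some $m$ iff $\mathbb{N}\models\varphi$, the first equivalence using that all finite sizes $m$ occur.

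Finally, the assignment $\varphi\mapsto\varphi^{\ast}$ is computable (the realisation clause is the fixed definable property displayed earlier, and $\varphi^{\mathrm{int}}_b$ is obtained by a purely syntactic rewriting), so a decision procedure for $\mathrm{Th}(\mathcal{M})$ would yield one for $\Sigma_1$-truth in $\langle \mathbb{N}; +, \times, <\rangle$, equivalently for the halting problem---which is impossible. Hence $\mathrm{Th}(\mathcal{M})$ is undecidable. I expect the main obstacle to be the absoluteness step, namely controlling overflow so that the partial operations of $\mathcal{A}_m$ agree with true arithmetic on the range relevant to $\theta$; this is exactly where the relational encoding and the freedom to take $m$ arbitrarily large (guaranteed by the hypothesis that every finite size is realised) do the work, and it is the standard mechanism behind interpreting fragments of arithmetic, as in~\cite{t}.
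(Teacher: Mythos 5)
Your proposal is correct and follows essentially the same route as the paper, whose entire proof is the remark that a $\Sigma_1$-sentence $(\exists n)Q(n)$ with bounded matrix can be translated into an equivalent $\mathcal{M}$-sentence via the definable finite arithmetics $\mathcal{A}_{|S_b|}$; you have simply made the standard details explicit (the relational translation, the $\Sigma_1$-absoluteness between $\mathbb{N}$ and large finite segments, and the computable reduction from the halting problem). The only point worth adding is that the guard in your sentence $\varphi^{\ast}$ should also require the set $S_b$ to be bounded (definable using $<$), so that every parameter $b$ passing the guard really induces a finite arithmetic and cannot produce spurious witnesses.
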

\begin{proof}
Consider an arithmetic formula $(\exists n)Q(n)$ where $Q(x)$ is a bounded quantifiers formula. Under the assumptions of the lemma we can construct the equivalent formula in the structure $\mathcal {M}$, so the elementary theory of $\mathcal {M}$ is undecidable.
\end{proof}

\section{Rank of nodes }

Without loss of generality we suppose that a tree $Tr$ on $\mathbb N$ is a family of finite subsets $\mathbb N$ such that if $s \in Tr$ then any initial segment of $s$ belongs to $Tr$ as well. There is the order $s \preceq s' \rightleftharpoons s$ \emph{is initial segment of} $s'$ on the tree. We say that a relation $Tr(x,y)$ defines the tree, if $\{s_i | s_i = \{x | Tr(x,i)\}\}$ is a tree.

We are going to define the main notion of the article: the rank of a tree node. But before the definition of rank  we need the supporting partial mapping $g\colon Tr \to \mathbb N$. We describe the mapping $g$ in the terms of the $s$-game assigned to a node $s$ of the tree. \textbf{Game}: There are 2 players. In the starting position all items of the node $s$ are drawn on the natural numbers line (red dots):
 \\
\includegraphics[scale=0.5]{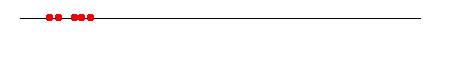}
 \\
 First player mark a boundary $a\geqslant \max(s)$ (black dot).
 \\
\includegraphics[scale=0.5]{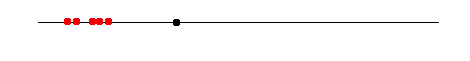}
\newpage
The second player has to choose a finite set $s', \min(s')>a$ of numbers (pink dots)  in such a way, that $s \cup s'$ form new node:
\\  
\includegraphics[scale=0.55]{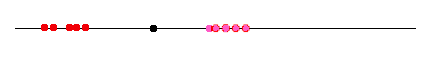}
\\
Now it is first player turn, and so on. 
\\

We set $g(s) = \max k$ [there is a strategy for second player not to lose the game in k moves].
It's easy to note that $g(s) =0 \iff s$ has finite number of sons. We say that a node is \emph{k-regular} if $g(s)=k$ and \emph {regular} if it is $k$-regular for some $k$.

Now we define a \emph{rank} of nodes: a partial mapping $rk \colon Tr \to \mathbb N$: $rk(s)=n \iff (1)\quad any s' \succ s$ is regular and  $ (2)\quad n=\max\{g(s') | s' \succ s\}$.
Note that $rk(s)=0$ if the subtree $\{s' | s \preceq s'\}$ is locally finite. 

We say that a node $s$ of \emph{finite rank} ($rk(s) < \infty$) if $rk(s)$ is defined, otherwise we say that $s$ of \emph{infinite rank} ($rk(s)=\infty$).  

\begin{lemma} \label{rk0}

For any node $s$ 

(i) if $rk(s)=n, s_1 \succ s$, then $s_1$ has finite rank and $rk(s_1) \leqslant rk(s)$.

(ii) if $rk(s)=n, s_1 \succ s$, then $s_1$ has finite rank and $rk(s_1) \leqslant rk(s)$.

(iii) $rk(s)\geqslant g(s)$.

(iv) if $rk(s)=n, s_1 \succ s$, then $s_1$ has finite rank and $rk(s_1) \leqslant rk(s)$.

(v) if $g(s)=n$ then for any $a> \max(s)$ there is a finite set $s', \min(s')>a$ such that $g(s \cup s')=rk(s \cup s')=n-1$.

(vi) if $g(s)=n$ then there is such $a>\max(s)$ that $g(s \cup s')<n$ for any finite subset $s', \min(s')>a, s\cup s'$ is the tree node.

(vii) if $rk(s)=n$ then there are infinitely many pairwise incomparable $s' \succ s$, such that $rk(s')=g(s')=n-1$

\end{lemma}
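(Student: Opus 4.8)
The plan is to first record the game-theoretic recursion that the value $g$ satisfies, and then read off every clause from it, keeping in mind that rank is a supremum of $g$ over an entire subtree and so behaves far better than $g$ itself, which is \emph{not} monotone along branches. Unwinding the $s$-game round by round, one proves by induction on $n$ that $g(s)=0$ iff $s$ has finitely many sons and, for $n\ge 1$,
\[
g(s)\ge n \iff \text{for every } a\ge\max(s)\ \text{there is } t\succ s\ \text{with } \min(t\setminus s)>a\ \text{and } g(t)\ge n-1 ,
\]
since Player~2 can guarantee $\ge n$ moves exactly when, against any first boundary $a$, she has a legal extension $t$ (necessarily with $\min(t\setminus s)>a$) from which she can still guarantee $\ge n-1$ moves. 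Negating gives the dual form used below: $g(s)\le n$ iff there is a boundary $a_0$ such that every $t\succ s$ with $\min(t\setminus s)>a_0$ has $g(t)\le n-1$. Throughout I read $rk(s)$ as $\max\{g(t):t\succeq s\}$, the maximum of $g$ over the subtree rooted at $s$ \emph{including} $s$ (with value $\infty$ if some such $t$ is irregular); because the values are natural numbers, finiteness of $rk(s)$ exactly means this maximum is attained. This reflexive reading is forced by clause (iii).

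Clauses (i), (ii), (iv) are the identical statement, so it suffices to prove once: if $rk(s)=n$ and $s_1\succ s$, then $rk(s_1)\le n$. This is immediate, for the subtree rooted at $s_1$ is contained in that rooted at $s$; hence every node above $s_1$ is regular and $\max\{g(t):t\succeq s_1\}\le\max\{g(t):t\succeq s\}=n$. Clause (iii), $rk(s)\ge g(s)$, is equally immediate since $s\succeq s$ contributes $g(s)$ to the maximum.

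The one real point, underlying (v) and (vi), is that a bound on $g$ \emph{beyond a boundary} upgrades automatically to a bound on $rk$: if $t\succ s$ with $\min(t\setminus s)>a_0$, then every $t'\succeq t$ again satisfies $t'\succ s$ and $\min(t'\setminus s)=\min(t\setminus s)>a_0$, because a further extension only adds elements larger than everything present. Now let $g(s)=n\ge 1$ and take the boundary $a_0$ from the dual recursion: every $t\succ s$ with $\min(t\setminus s)>a_0$ then has its whole subtree lying beyond $a_0$, so $g\le n-1$ there and hence $rk(t)\le n-1$. Clause (vi) is just the existence of this $a_0$. For clause (v), fix $a>\max(s)$, put $a^{*}=\max(a,a_0)$, and apply the positive recursion at boundary $a^{*}$ to get $t\succ s$ with $\min(t\setminus s)>a^{*}$ and $g(t)\ge n-1$; since $\min(t\setminus s)>a_0$ forces $rk(t)\le n-1$ and a fortiori $g(t)\le n-1$, we get $g(t)=n-1$, and combining $rk(t)\le n-1$ with (iii) gives $rk(t)=n-1$. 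Taking $s'=t\setminus s$ proves (v).

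For clause (vii) (with $n\ge 1$), use that $rk(s)=n$ is an attained maximum to choose $u\succeq s$ with $g(u)=n$, and apply (v) to $u$ repeatedly with growing boundaries: having found $v_1,\dots,v_i\succ u$ with $g(v_j)=rk(v_j)=n-1$, pick the next boundary $a_{i+1}\ge\min(v_i\setminus u)$ and obtain $v_{i+1}\succ u$ with $\min(v_{i+1}\setminus u)>a_{i+1}$ and $g(v_{i+1})=rk(v_{i+1})=n-1$. The first new elements $\min(v_j\setminus u)$ are then strictly increasing, so the $v_j$ pass through distinct sons of $u$ and are pairwise incomparable; as $v_j\succ u\succeq s$, these are the required infinitely many incomparable descendants of $s$. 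The main obstacle here is conceptual rather than computational: since $g$ can jump upward as one descends (a node with a single son but a huge subtree has $g=0$), none of (iii), (v), (vii) is available from naive monotonicity of $g$; the boundary-stability observation of the third paragraph, which turns the recursion's bound ``$g\le n-1$ beyond $a_0$'' into the subtree bound ``$rk\le n-1$'', is what makes the exact values drop out, and I would spend the most care verifying the recursion and its dual cleanly, as clauses (v)--(vii) all rest on them.
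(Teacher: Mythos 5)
Your proof is correct and takes essentially the same route as the paper: your positive/dual recursion for $g$ is exactly the paper's ``best answer of the second player'' / ``best first move $a_0$'' argument, your boundary-stability observation is the paper's remark that every extension of $s\cup s'_a$ is itself a legal reply to $a$ (whence $rk = g = n-1$ there), and your treatment of (vii) by applying (v) with growing boundaries matches the paper's. You merely make explicit what the paper leaves implicit, namely the reflexive reading of $rk$ (forced by (iii), as you note) and the passage to $\max(a,a_0)$ so that (v) holds for an arbitrary boundary $a$.
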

\begin{proof}
$ $
(i)--(iv) obvious, due to definitions.

(v) for any move  $a \geqslant \max(s)$ of the first player denote by $s'_a$ a best answer of the second player, by the definition of mapping $g$ holds (a) $g(s \cup s'_a) \geqslant n-1$ and (b) there is such $a_0$ (best first move of the first player) such that $g(s \cup s'_a) = n-1$ for all $a \geqslant a_0$. Due to (iv) $rk(s \cup s'_a) \geqslant n-1$.  If $s'_a$ is the best answer, then $g(s \cup s'_a) \geqslant g(s'')$ for all $s'' \succ s \cup s'_a$ so $rk(s \cup s'_a) =g(s \cup s'_a)=n-1$. The existence of infinitely many pairwise incomparable $s_a' \succ s$ for different $a>a_0$ is obvious.

(vi) Denote by $a_0$ a best move of the first player in the $s$-game. Then for any replay $s', \min(s')>a_0$ of the second player holds $g(s \cup s')<n$.

(vii) if $rk(s)=n$, then there is such $s' \succ s$ that $g(s')=n$, so we use (v) here.
\end{proof}

\begin{lemma} \label{fin}
Consider a structure $\mathcal {M} = \langle \mathbb N, \{Tr,<\} \rangle$, where the relation $Tr( a,x,y)$ with a parameter $ a$ defines a family of trees on $\mathbb N$ and $<$ is the usual order on $\mathbb N$. If the elementary theory of $\mathcal {M}$ is decidable, then there is such number $k$, that $rk(s)<k$ holds for all nodes $s$ of finite rank in all trees $Tr( a,x,y)$.
\end{lemma}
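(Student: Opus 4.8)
The plan is to argue by contraposition. Suppose the conclusion fails, so that among the finite‑rank nodes of the trees $Tr(a,x,y)$ the values $rk(s)$ are unbounded. Under this assumption I will produce definable relations $S(b,x)$ and $D(b,x,y,z)$ satisfying the hypotheses of Lemma~\ref{model}, whence the elementary theory of $\mathcal{M}$ is undecidable. Thus decidability forces a uniform bound $k$ with $rk(s)<k$ for all finite‑rank nodes in all the trees.

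The combinatorial input is Lemma~\ref{rk0}. Starting from a node $s$ with $rk(s)=m$, part (vii) yields infinitely many pairwise incomparable descendants of rank $m-1$; iterating (vii) down the ranks $m,m-1,\dots,0$ produces, inside a single tree $Tr(a,\cdot,\cdot)$, a configuration of depth $m$ that branches infinitely at every level while the rank (equivalently the game value $g$) drops by exactly one at each step, using (v) to control the boundaries. For a given $n\le m$ I would truncate this system at depth $n$, obtaining a depth‑$n$, infinitely branching system of nodes with exactly $n$ levels. The parameter $b_n$ will be a code for the tree parameter $a$ together with the finitely many witnessing nodes and boundaries of this system; since the finite ranks are unbounded, such a $b_n$ exists for every $n$.

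From this system I define $S(b_n,x)$ to be an $n$‑element set of markers, one per level, so that $|\{x:S(b_n,x)\}|=n$, and I define $D(b_n,x,y,z)$ so that for a suitable value $z=u_k$ the induced relation $B_{u_k}(x,y)=D(b_n,x,y,u_k)$ has as its sections $s^{B}_{i}$ exactly the $k$‑element subsets of $S$: a $k$‑subset is encoded by a descendant of $s$ that branches at precisely the $k$ chosen levels, the infinite branching of (vii) guaranteeing that every such subset is realised, while the initial‑segment order $\preceq$ together with $<$ let me read off which markers lie in the encoded subset. The ``swap one element'' adjacency and the passage from $k$ to $k+1$ demanded by the formulas of Section~2 then correspond to moving between neighbouring descendants of the system, so that $D$ \emph{realises the arithmetic} on $S$ in the precise sense defined there.

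The main obstacle is definability, not combinatorics. The notions $g$ and $rk$ are defined through an infinite game, i.e.\ a quantification over strategies, and are not themselves first‑order over $\{Tr,<\}$; the construction must therefore never mention rank directly, but instead freeze, inside the parameter $b_n$, concrete witnessing nodes and boundaries whose mutual relations — being tree nodes, the order $\preceq$, and comparisons of $\max$ and $\min$ under $<$ — are first‑order expressible. The delicate point is to carry this out \emph{uniformly} in $n$, keeping $S$ and $D$ fixed single formulas with only $b_n$ varying, and then to verify that the resulting sections $s^{B}_{i}$ satisfy verbatim the ``realises a number'' and ``realises the arithmetic'' axioms of Section~2. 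Once that verification is in place, Lemma~\ref{model} applies and the proof is complete.
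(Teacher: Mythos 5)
Your overall route is the paper's own: argue by contraposition, fix a tree containing nodes of arbitrarily large finite rank, and interpret the arithmetic of a segment $[0,n]$ so that Lemma~\ref{model} yields undecidability, encoding $k$-element subsets of a marker set by descendants of a node of game value $k$ that meet exactly $k$ prescribed gaps. But the proposal stops precisely where the real work lies, and the gap is genuine on two counts. First, uniformity in $n$. You say the parameter $b_n$ ``will be a code for'' the tree parameter together with the witnessing nodes and boundaries; however, the structure is the bare $\langle \mathbb N, \{Tr,<\}\rangle$, with no pairing or sequence-coding apparatus, and the number of boundaries $a_1<b_1<\dots<a_n<b_n<a_{n+1}$ grows with $n$, so no parameter tuple of \emph{fixed} arity can list them. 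You explicitly flag this as ``the delicate point'' but do not resolve it, and resolving it is the key idea of the paper's proof: Lemma~\ref{tmp2} compresses the entire ladder of boundaries into just \emph{two} nodes $u,v \succ s$, constructed by alternately extending $u$ and $v$ with the game value $g$ dropping by one at each step, so that the sets $A_{u,v}$ and $B_{u,v}$ --- defined from $u,v$ by a single first-order formula reading off their interleaving pattern --- recover all the $a_i$ and $b_i$ uniformly. Without this two-node compression (or an equivalent device) your $S(b,x)$ and $D(b,x,y,z)$ simply do not exist as single formulas, and Lemma~\ref{model} cannot be invoked.

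Second, your verification that the sections $s^B_i$ are \emph{exactly} the $k$-element subsets uses only the existence direction: the infinite branching of Lemma~\ref{rk0}(vii) indeed realizes every $k$-subset, but nothing in your sketch rules out sections that are too large, and that is what the antichain clause of ``realises the number $k$'' requires. The paper handles this quantitatively: the boundaries must be spaced using the functions $\varphi,\psi$ extracted from Lemma~\ref{rk0}(v) and (vi), precisely so that Lemma~\ref{tmp1}(ii) holds --- a descendant of a node with $g(s)=k$ avoiding all intervals $(a_i,b_i)$ can meet at most $k$ of the gaps $[b_i,a_{i+1}]$, because crossing a $\varphi$-boundary forces the game value to drop strictly. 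Combined with restricting to descendants $s \succ s_i$ for which the definable set $A_s$ is \emph{maximal}, this gives exactness; part (vii) alone cannot, since it says nothing about an upper bound on how many gaps a descendant may meet. A smaller slip in the same direction: the auxiliary nodes $s_i$ with $g(s_i)=i$ that realize the numbers $0,\dots,n$ are chosen entirely below $a_1$ and are separate from the ladder; they are not obtained by ``truncating'' the branching system at depth $i$, and the markers are the elements of the single set $A_{u,v}\subset\mathbb N$ rather than one marker per level of a depth-$n$ tree of nodes.
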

\begin{proof}
In the contrary: we suppose that there are nodes of arbitrary big finite rank and show that conditions of lemma \ref{model} hold. We fix a value of the parameter $ a_0$ and consider the tree $Tr=Tr( a_0,x,y)$.

We define functions $\varphi(x), \psi(x,y)$ on $\mathbb N$ in the following way: for a number $a \in \mathbb N$ consider the segment $[0,a]$ and choose a node $s \subset [0,a]$, let $g(s)=k$-regular for some $k$. Then (Lemma \ref{rk0} (vi)) exists $b_s>a$, such that $(s' \succ s, s' \cap (max(s)+1,b_s)=\varnothing) \Rightarrow g(s') \leqslant k-1$. Define $\varphi(a)$ such that $\varphi(a) > b_s$ for all regular $s \subset [0,a]$.

Let $a<b$. Choose a node $s \subset [0,a]$, let $s$ be $k$-regular for some $k$. Then (Lemma \ref{rk0} (v)) exists $s' \succ s, s' \cap (max(s),b)=\varnothing, g(s')=k-1$ . Define $\psi(a,b)$ such that $\psi(a,b)>\max(s')$ for all regular $s \subset [0,a]$.

Note that functions $\varphi, \psi$ are monotonic. We do not assert (yet) that they are definable in $\mathcal M$.

To continue the proof of lemma \ref{fin} we need two following lemmas:

\begin{lemma}\label{tmp1} Suppose that for $a_1<b_1<\dots<a_n<b_n<a_{n+1}$ holds $b_i>\varphi(a_i), a_{i+1}>\psi(a_i,b_i)$ . Choose $s \subset [0,a_1], g(s)=k<n$. Then

(i) for any $u \subset [1,n-1], |u|=k$ there is such node $s' \succ s$, that $s' \cap (a_i,b_i)=\varnothing$ for all $i<n$ and $\{i | s' \cap [b_i, a_{i+1}] \ne \varnothing\}=u$.

(ii) if $s' \succ s$ and $s' \cap (a_i,b_i)=\varnothing$ for all $i \leqslant n$, then $|\{i | s' \cap [b_i, a_{i+1}] \ne \varnothing\}| \leqslant k$.
\end{lemma}
\begin{proof}
$ $

(i) induction on $k$. Let $i=\min(u)$. By definition of  mapping $\psi$ and because $s \subset [0, a_i]$,  there is such $s' \succ s$, that $s' \cap (max(s)+1,b_i)=\varnothing, max (s')<a_{i+1}, g(s')=k-1$. So we can apply an inductive hypothesis to the collection $a_{i+1}<b_{i+1}<\dots<b_n<a_{n+1}$, the node $s'$ an the set $u \setminus \{i\}$.

(ii) suppose that $s' \succ s$ and $s' \cap (a_i,b_i)=\varnothing$ for all $i<n$ and $\{i | s' \cap [b_i, a_{i+1}] \ne \varnothing\}=\{c_1<c_2<\dots<c_m\}$. By induction on $i$ show that $r(s' \cap [0,a_{{c_i}+1}]) \leqslant k-i$. If $r(s' \cap [0,a_{c_{i-1}+1}])=m$, then by definition of mapping $\varphi$ for any $s'' \succ s', s'' \cap [a_{c_i},b_{c_i}]=\varnothing$ holds $r(s'')<rk(s'')\leqslant m$, i.e. $r(s' \cap [0,a_{c_{i+1}}]) < r(s' \cap [0,a_{c_i}])$.
\end{proof}

\begin{lemma}\label{tmp2}
For any $u, v \succ s$ we denote $A_{u,v} \rightleftharpoons \{max(s)\} \cup \{a \in u \setminus s |  (\forall a' < a, a' \in u )([a',a] \cap v \ne \varnothing)\}, B_{u,v} \rightleftharpoons \{b \in v \setminus s |  (\forall b' < b)([b',b] \cap u \ne \varnothing)\}$. If $r(s)=n+1$ then there are such $u, v \succ s$ that  sets $A_{u,v}=\{a_1<\dots<a_{n+1}\}$ and $B_{u,v}=\{b_1<\dots<b_n\}$ meet the conditions of lemma \ref{tmp1}
\end{lemma}
\begin{proof}
We will construct collections $u_0 \prec u_1 \prec \dots \prec u_n$=$v_0 \prec v_1 \prec \dots \prec v_{n-1}=v$ such that $u_0=v_0=s, r(u_i)=r(v_i)=n+1-i$. Suppose that  $u_i, v_i$  are already constructed, $max(v_i) \leqslant max(u_i)$. Choose such $v_{i+1} \succ v_i$ that $g(v_{i+1})=g(v_i)-1, v_{i+1} \cap (max(v_i)+1,\varphi(max({u_i}) = \varnothing$. Since $min(v_{i+1}\setminus v_i) > u_i$, so $A_{u_i,v_{i+1}}=A_{u_i,v_{i}}, B_{u_i,v_{i+1}}=B_{u_i,v_{i}} \cup \{min(v_{i+1})\setminus v_i)\}$ and $min(v_{i+1}\setminus v_i) > \varphi(max(u_i)) \geqslant \varphi(max(A_{u_i,v_{i}}))$.

Now we in the same way choose the node $u_{i+1}$ considering the node $v_{i+1}$ instead of $u_i$ and the number $\psi(max(u_i), max(v_{i+1}))$ instead of $\varphi(max(u_i))$.
\end{proof}

Continue the proof of lemma \ref{fin}. Suppose that there exist nodes of arbitrary big finite rank. Fix some $n \in  \mathbb N$. According the lemma  \ref{tmp2} there are  such $u,v$, that the sets $A_{u,v}, B_{u,v}$ meet the conditions of lemma \ref{tmp1} and $|A_{u,v}|=n$. Since we can choose $A_{u,v}$ such that minimal member $a_1$ of this set is arbitrary big, we suppose that  $a_1>s_1, a_1>s_2, \dots, a_1>s_n$ for some nodes $g(s_i)=i$. We will interpret the arithmetic of segment  $[0,n]$ on $A_{u,v}$, the node $s_i$ will realise the number $i$. Namely for any node $s$ we define the finite subset $A_s \subset A_{u,v}$ so that  $A_s=\{a_i | (\forall k)((a_k, b_k) \cap s=\varnothing) \land [b_{i-1},a_i] \cap s \ne \varnothing \}$. It is obvious that there is a simple formula $Q(u,v,s,x)$ in the structure $\mathcal M$ defining the relation $x \in A_s$. For any $s_i$ we consider all $s \succ s_i$ such that the subset  $A_s$ is maximal. According the lemma \ref{tmp1} they will be all $i$-element subsets of $A_{u,v}$ and so the $s_i$ realises the number $i$ on  $A_{u,v}$. According the lemma \ref{model} the elementary theory of the structure  $\mathcal M$ is undecidable.
\end{proof}

\begin{seq}\label{seq1}
Let a relation $Tr(y,x)$ defines a tree on $\mathbb N$, and elementary theory of the structure $\mathcal M=\langle \mathbb N, \{Tr,<\} \rangle$ is decidable. Then

(i) the relation "$s$ \emph{is a node of finite rank}" is definable (in $\mathcal M$).

(ii) the functions $\varphi, \psi$ are definable.

(iii) if the set of nodes of infinite rank is not empty, then it contains a definable subtree isomorphic to $\mathbb N^{<\omega}$.

(iv) there is $k \in \mathbb N$ such that for any node $s$ of finite rank, $s=\{a_1<a_2<\dots<a_n\}$, holds $k \geqslant |\{a_i \in s | a_{i+1} > \varphi(a_i)\}|$.
\end{seq}
\begin{proof}
According the lemma \ref{fin} there is such $k \in \mathbb N$ that $k \geqslant rk(s)$ for every node $s$ of finite rank.

(i) the relation "$rk(s)=0$" is definable, so by induction the relation "$rk(s)=i$" is definable for any $i$. Then the relation "$s$ \emph{is a node of finite rank}" is equivalent to $\bigvee_{i=0}^{k}rk(s)=i$.

(ii) immediately  follows from (i).

(iii) the relation $inf(s) \rightleftharpoons$ "$s$ \emph{is a node of infinite rank}" is definable. To proof the isomorphism to $\mathbb N^{<\omega}$ it is enough to show, that for any node $s$  of infinite rank there is a node of  infinite rank $s' \succeq s$, such that $(\forall a)(\exists s'' \succ s')(inf(s'') \land s'' \cap [max(s'')+1,a]=\varnothing)$. On contrary suppose that $(\forall s' \succeq s)(\exists a)(\forall s'' \succ s')((s'' \cap [max(s'')+1,a]=\varnothing) \to rk(s'') \leqslant k)$, then, by definition the node $s$ has finite rank.

(iv) from the definition of the function $\varphi$ follows that $(a_{i+1} > \varphi(a_i)) \Rightarrow rk(\{a_1,\dots,a_{i+1}\}) < rk(\{a_1,\dots,a_{i}\})$.
\end{proof}
\begin{seq}\label{seq2}
Let a relation $Tr(a,y,x)$ with the parameter $a$ defines a family of trees on $\mathbb N$, elementary theory of the structure $\mathcal M=\langle \mathbb N, \{Tr,<\} \rangle$ is decidable. Then

(i) the relation $Q(a) \rightleftharpoons$ "\emph{there is an infinite branch in the tree}  $Tr(a,y,x)$" is definable.

(ii) if there is an infinite branch in the tree  $Tr(a,y,x)$ then there is a definable infinite branch.
\end{seq}
\begin{proof}
According the lemma \ref{fin} there is such number $k$ that  $k \geqslant rk(s)$ holds for all nodes $s$ of finite rank in all trees $Tr(a,y,x)$. So the relation  $inf(a,s) \rightleftharpoons$ "$s$ \emph{is a node of infinite rank in the tree} $Tr(a,y,x)$" is definable. Consider two cases.

(1) There is a node of infinite rank in the tree $Tr(a,y,x)$. Then due to  sequence \ref{seq1}(ii) there is a definable infinite branch in the tree.

(2) All nodes of the tree $Tr(a,y,x)$ are of finite rank. To any node $s \in Tr(a)$ asssign the subtree $Tr_s=\{s' \succ s | rk(s')=rk(s)\}$. We show that there is an infinite branch in the tree $Tr(a,y,x)$ if and only if the tree $Tr_s$ is infinite for some $s$. Note that the tree $Tr_s$ is locally finite. Indeed, if a node $s$ has an infinitely many sons  $s' \succ s, rk(s')=rk(s)$, then $(\forall a)(\exists s' \succ s)(s' \cap [max(s')+1,a]=\varnothing \land rk(s')=rk(s))$, and, by the definition of the function $r$, holds $rk(s) \leqslant r(s) < rk(s)$. So if the tree $Tr_s$ is infinite, then there is a definable infinite branch, which is the branch in the tree $Tr(a,y,x)$ as well.

Conversely, suppose that  in the tree $Tr(a,y,x)$ exists an infinite branch $s_1 \prec \dots \prec s_n \prec \dots$. Because $rk(s_i) \geqslant rk(s_{i+1})$, so for some $n$ and for all $i>0$ holds $rk(s_n)=rk(s_{n+i})$ and the tree $Tr_{s_n}$ is infinite.

\end{proof}



\begin{thebibliography}{9}
\normalsize 
\bibitem{b} 
Bancerek, G. K\"onig{'}s lemma. Formalized Mathematics 2.3 (1991): 397-402.
\bibitem{t}
Tarski, A., Mostowski, A., Robinson, R. M. (Eds.). (1953). Undecidable theories (Vol. 13). Elsevier.
\end{thebibliography}
\end{document}